\pgfplotsset{compat=1.9}
\pgfplotsset{compat=newest}  %
\providecommand{\U}[1]{\protect\rule{.1in}{.1in}}
\providecommand{\U}[1]{\protect\rule{.1in}{.1in}}
\theoremstyle{definition}
\newtheorem{assumption}{Assumption}
\newtheorem{definition}{Definition}
\theoremstyle{plain}
\newtheorem{lemma}{Lemma}
\newtheorem{remark}{Remark}
\newtheorem{problem}{Problem}
\theoremstyle{definition}
\newcommand{\hilitediff}[1]{#1}
\newcommand{\ie}{i.e.,\xspace}
\DeclareMathOperator*{\minimize}{minimize}
\newcommand{\NNN}{\mathcal{N}\xspace}
\begin{document}
\begin{frontmatter}
\title{A Safety-Prioritized Receding Horizon Control Framework for Platoon Formation in a Mixed Traffic Environment} %

\author[Paestum]{A M Ishtiaque Mahbub}\ead{mahbub@udel.edu},               %
\author[Paestum]{Viet-Anh Le}\ead{vietale@udel.edu},  %
\author[Paestum]{Andreas A. Malikopoulos}\ead{andreas@udel.edu}
\address[Paestum]{Department of Mechanical Engineering, University of Delaware, 126 Spencer Lab, 130 Academy Street, Newark, DE, 19716, USA}  %
\begin{keyword}                           %
Connected and automated vehicles, platoon formation, mixed traffic environment, receding horizon control, safety.
\end{keyword}                             %

\begin{abstract}
Platoon formation with connected and automated vehicles (CAVs) in a mixed traffic environment poses significant challenges due to the presence of human-driven vehicles (HDVs) with unknown dynamics and control actions.
\hilitediff{In this paper, we develop a safety-prioritized receding horizon control framework for creating platoons of HDVs preceded by a CAV.} 
\hilitediff{Our framework ensures indirect control of the following HDVs by directly controlling the leading CAV given the safety constraints.}
\hilitediff{The framework utilizes a data-driven prediction model that is based on the recursive least squares algorithm and the constant time headway relative velocity car-following model to predict future trajectories of human-driven vehicles.}
To demonstrate the efficacy of the proposed framework, we conduct numerical simulations and provide the associated scalability, robustness, and performance analyses.
\end{abstract}

\end{frontmatter}

\section{Introduction}
\label{sec:1}

\subsection{Motivation}
The emergence of connected automated vehicles (CAVs) introduces a novel mobility paradigm that enables efficient communication and real-time computation of control actions to optimize vehicle performance, traffic efficiency, and other associated benefits; see \citet{Margiotta2011, Malikopoulos2017}. %
There is a rich body of literature that adopts optimal control approaches for coordinating CAVs to improve vehicle- and network-level performance. Detailed discussions of these efforts can be found in the review papers; see \citet{Malikopoulos2016a,Guanetti2018}. 
Recently, several efforts have been reported for real-time coordination of CAVs in different traffic scenarios such as on-ramp merging roadways \citep{Ntousakis:2016aa}, roundabouts \citep{bakibillah2019optimal}, speed reduction zones \citep{Malikopoulos2018c}, signal-free intersections \citep{Mahbub2019ACC, Malikopoulos2017,Malikopoulos2020} and corridors \citep{mahbub2020decentralized,Zhao2018ITSC}. 
However, these approaches are developed with the assumption of a 100\% CAV penetration rate which is far from being realizable in the near future; see \citet{alessandrini2015automatedmixed2060}. %
\hilitediff{Hence, addressing safe and efficient operation of CAVs in a \emph{mixed traffic environment} where CAVs and human-driven vehicles (HDVs) co-exist is necessary.}

\hilitediff{
The presence of HDVs poses significant challenges to the CAVs related to modeling and control  due to the stochastic and diverse nature of human-driving behavior.
Recently, there have been several control approaches proposed to address the motion planning and control of the CAVs in different mixed traffic scenarios, such as
model predictive control \citep{leung2020infusing,wang2022data,mahbub2022_ifac}, learning-based control \citep{wu2021flow,chalaki2020ICCA,valiente2022robustness}, game-theoretic control \citep{chandra2022gameplan,liao2021cooperative}, and socially-compatible control \citep{schwarting2019social,Le2022CDC,wang2021socially,ozkan2021socially}.
Although these approaches have demonstrated quite impressive performance, they only address the motion planning and control problem for single CAVs, therefore, cannot exploit the potential benefits of coordinating multiple CAVs to manipulate the traffic flow and eliminate stop-and-go driving.
To the best of our knowledge, coordinating multiple CAVs given the presence of HDVs still remains an open problem.
}

\hilitediff{
To develop a framework for efficiently coordinating CAVs in the mixed traffic environment, our hypothesis is that the motion of the HDVs must be indirectly controlled.
In other words, we can use a CAV to restrict the motion of its following HDVs, and thus indirectly control the HDVs. 
One approach to validate this hypothesis is to leverage the concept of vehicle platooning, where we can control a CAV to force the following HDVs to form a platoon. 
A platoon is a closely-spaced group of vehicles traveling in a controlled manner, which has potential benefits such as increasing traffic throughput and fuel economy; see \citet{alam2015_platoon_benefit}.
In this paper, in an attempt to indirectly control the following HDV trajectories, we propose a framework for platoon formation where the CAVs are controlled to compel the following HDVs to form platoons.
}

\hilitediff{Next, we provide a review of the articles related to vehicle platooning that have been reported in the literature to date.}

\subsection{Literature Review}
A significant number of research efforts have been reported in the literature that explores various methods of vehicle platooning. 
Vehicle platooning can be broadly classified into two major categories: (a) platoon formation, where individual vehicles aim at creating a previously non-existent platoon or join an already existing platoon; see \citet{karbalaieali2019dynamic,johansson2018multi,xiong2019analysis,Beaver2021Constraint-DrivenStudy, mahbub2021_platoonMixed, mahbub2022ACC}, and (b) platoon control, where vehicles within an established platoon are controlled to achieve some objectives, such as string stability, safe following gap control, and coordination; see \citet{ard2020optimizing, van2017fuel,Kumaravel:2021wi, zhao2018platoonIntersection}. 
A detailed overview of the literature on vehicle platooning can be found in some survey papers; see \citet{platoon_survey_1,bhoopalam2018planning}.

The problem of platoon formation, in general, has been widely studied considering $100 \%$ CAV penetration.
Some approaches based on model predictive control (MPC) have been reported to guarantee string stability and safety; see \citet{zheng2017platooning, dunbar2006distributed,platoon_survey_1,zheng2016distributed}. 
Such control approaches, however, cannot be applied to a mixed traffic environment with a partial CAV penetration rate due to the presence of uncontrollable HDVs.
The literature on platoon formation is sparse in the context of a mixed traffic environment.
One of the most important research directions toward developing a control framework for a mixed traffic environment has been the development of cruise control and adaptive cruise control (ACC) \citep{zheng2017platooning, sharon2017protocol}, 
where a CAV preceded by a single or a group of HDVs employs a control algorithm to optimize a given objective, e.g., improvement of fuel economy \citep{jin2017fuel_mixplatoon}, minimization of backward propagating wave \citep{hajdu2019robust}.
A variation of the ACC framework has been developed to control CAVs in a mixed traffic environment; see \citet{yuan2009trafficMixedACC, chin_mpc_acc_mixed} to tackle the HDV behavior and to ensure rear-end collision avoidance. %
Recently, some efforts have combined the concept of ACC with a vehicle-to-vehicle communication protocol and proposed connected cruise control or cooperative ACC for the CAVs traveling within a mixed traffic environment; see \citet{orosz2016connected,hajdu2019robust}.
Other approaches have employed robust or data-driven MPC to ensure the safety of the CAVs in mixed vehicle platoons; see \citet{Lan_datadriven_mpc_mixedPlatoon,feng2021robust}. 
These approaches are limited to the cases where the objective is to control the ego CAV to join and/or to maintain the stability and safety of an already formed platoon.

The performance of MPC-based controllers is highly affected by the accuracy of prediction of the HDV trajectories within a look-ahead horizon.
Several research efforts reported in the literature have considered different approaches to estimate and predict the driving behavior of HDVs. 
For example, \citet{lu2019ecological} used a variation of the car-following model for predicting HDV trajectories to design an eco-ACC controller.
However, ACC controllers using car-following models such as the intelligent driver model (IDM); see \citet{treiber2013traffic}, do not always perform well while they exhibit string stability implications that can lead to rear-end collision; see \citet{milanes2014ACC}.
\citet{milanes2013cooperative} proposed a cooperative ACC where the control parameters are derived using system identification on real-world experimental data. The issue with such an approach is that the control parameters cannot capture the instantaneous changes in HDV behavior.
\citet{naus2010mpcAcc} proposed an explicit MPC-based ACC controller that employs a prediction model considering a constant speed of the preceding vehicle within the prediction horizon and does not incorporate the complex car-following dynamics of the human drivers.
\citet{dollar2021mpc} utilized an IDM model to identify offline the human driving styles in a car-following scenario and developed an MPC-based cruise control for CAV control.
\citet{jin2018connected} proposed an optimal cruise control design in which feedback gains and driver reaction time of HDVs were estimated in real-time by a sweeping least squares method.
\citet{gong2018cooperative} developed a cooperative MPC framework and combined Newell car-following model with an online curve-matching algorithm to anticipate the response delay of the HDVs.

\subsection{\hilitediff{Contributions of This Paper}}

\hilitediff{Although the existing literature on vehicle platooning is relatively rich, most of the research efforts have concentrated on platoon formation of multiple CAVs given safety requirements with surrounding HDVs, or controlling CAVs to form and maintain a platoon with the preceding HDVs.}
\hilitediff{In contrast, our approach attempts to develop a control framework for platoon formation for a CAV with multiple following HDVs.
If we can form and maintain platoons with the following HDVs, we can optimally coordinate mixed vehicle platoons to eliminate stop-and-go driving in traffic scenarios with potential conflicts such as on-ramp merging, urban intersections, etc. (for example, see \cite{mahbub2022NHM}).} 

In earlier work, we addressed the problem of platoon formation in a mixed traffic network by considering that the leading CAV has either explicit knowledge of the following HDV dynamics; see \citet{mahbub2021_platoonMixed}, or does not have such explicit knowledge; see \citet{mahbub2022ACC}.
\hilitediff{In this paper, we propose a data-driven receding horizon control (RHC) framework that employs a prediction model for estimating the driving behavior of HDVs in real-time using a recursive least squares algorithm to predict future trajectories.} 
\hilitediff{In the proposed framework, the objectives of the CAV are (a) to form a platoon with the following HDVs, and (b) to minimize its control effort with enhanced rear-end collision safety constraints.}
To the best of our knowledge, such an approach has not yet been reported in the literature to date.

\hilitediff{In summary, the contributions of this paper are twofold:}
\hilitediff{(1) a comprehensive framework for platoon formation  to control the ego CAV that aims at forming a platoon with the following HDVs in a mixed traffic environment given the rear-end safety and system constraints (Section \ref{sec:pf}) along with a feasibility analysis (Lemmas \ref{lem:Bound_t^f} and \ref{lem:feasibility_platoon}); and and
(2) a data-driven receding horizon control approach (Section \ref{sec:model_dependent_mpc}) for platoon formation, where the driving behavior of the HDVs is estimated with the constant time headway relative velocity (CTH-RV) model and a recursive least squares algorithm.}
Finally, we provide numerical validation of the proposed approaches along with associated sensitivity, robustness, and performance analyses.

\subsection{Organization of the Paper}

The remainder of the paper proceeds as follows. 
In Section~\ref{sec:pf}, we formulate the problem of platoon formation in a mixed traffic environment and provide the modeling framework. 
\hilitediff{In Section~\ref{sec:model_dependent_mpc}, we present a data-driven predictive control framework to ensure the accuracy of HDV behavior prediction to form the platoon.}
In Section~\ref{sec:sim}, we numerically validate the effectiveness of the proposed control framework in a simulation environment. 
Finally, we provide concluding remarks in Section~\ref{sec:conc}.

\section{Problem Formulation}\label{sec:pf}
We consider a scenario where a group of vehicles, consisting of CAVs and HDVs, are traveling on a roadway as shown in Fig. \ref{fig:problem_formulation}.
We assign unique integer identities to the vehicles considered for the platoon formation problem as follows: (a) the ego CAV, which has the objective to form a platoon with its following HDVs, is indexed by $1$, (b) the preceding vehicle (PV) of CAV-$1$ is index by $0$, and (c) the HDVs following CAV-$1$ are indexed by the order of their distance from the ego CAV as $2,\ldots, N$, $N\in\mathbb{N}$ (see Fig. \ref{fig:problem_formulation}). 
\hilitediff{The objective is to control the leading CAV to form a platoon with the following HDVs that satisfies the system constraints and ensures safety in terms of rear-end collision with the preceding and following vehicles.}

Next, we define the following sets to represent different groups of vehicles. %
\begin{definition}\label{def:vehicle_sets}
The set of all vehicles considered in our problem formulation is $\mathcal{N}=\{0, 1,\ldots, N\}$. The set of HDVs following CAV-$1$ is $\mathcal{N}_{\text{HDV}}=\{2,\ldots, N\}\subset\mathcal{N}$. The set of vehicles to form the platoon is $\mathcal{N}_{p}=  \{1\} \cup \mathcal{N}_{\text{HDV}}$. 
\end{definition}

\begin{remark}
We generalize our exposition considering the existence of PV-$0$ which can be either CAV or HDV. 
In the case where PV-$0$ does not exist within a pre-defined look-ahead distance, we construct the set $\mathcal{N}$ without the element $\{0\}$ without loss of generality.
\end{remark}
\begin{remark}
For formulating a valid platoon formation problem for the vehicles in $\mathcal{N}$, the set $\mathcal{N}_{\text{HDV}}$ must be non-empty.
\end{remark}
In our formulation, we allow lane changes for HDVs in $\mathcal{N}_{\text{HDV}}$ during the platoon formation process. 
If any HDV in $\mathcal{N}_{\text{HDV}}$ decides to move to a different lane, or an HDV from an adjacent lane moves into the current lane, then we recompute the set $\mathcal{N}_{\text{HDV}}$ with updated vehicles identities. 
For example, given $\mathcal{N}_{\text{HDV}} = \{2,3,4\}$, let us consider two cases: (a) if HDV-$3$ moves to a different lane, then HDV-$4$ is updated to become HDV-$3$, resulting in $\mathcal{N}_{\text{HDV}} = \{2,3\}$, and (b) if an HDV from an adjacent lane moves in between HDV-$3$ and HDV-$4$, then the added HDV is assigned an ID of $4$, and previously known HDV-$4$ is updated to become HDV-$5$ resulting in $\mathcal{N}_{\text{HDV}} = \{2,3,4,5\}$.
\begin{figure*}
    \centering
    \includegraphics[width=0.88\textwidth,bb = 50 250 700 500,clip=true]{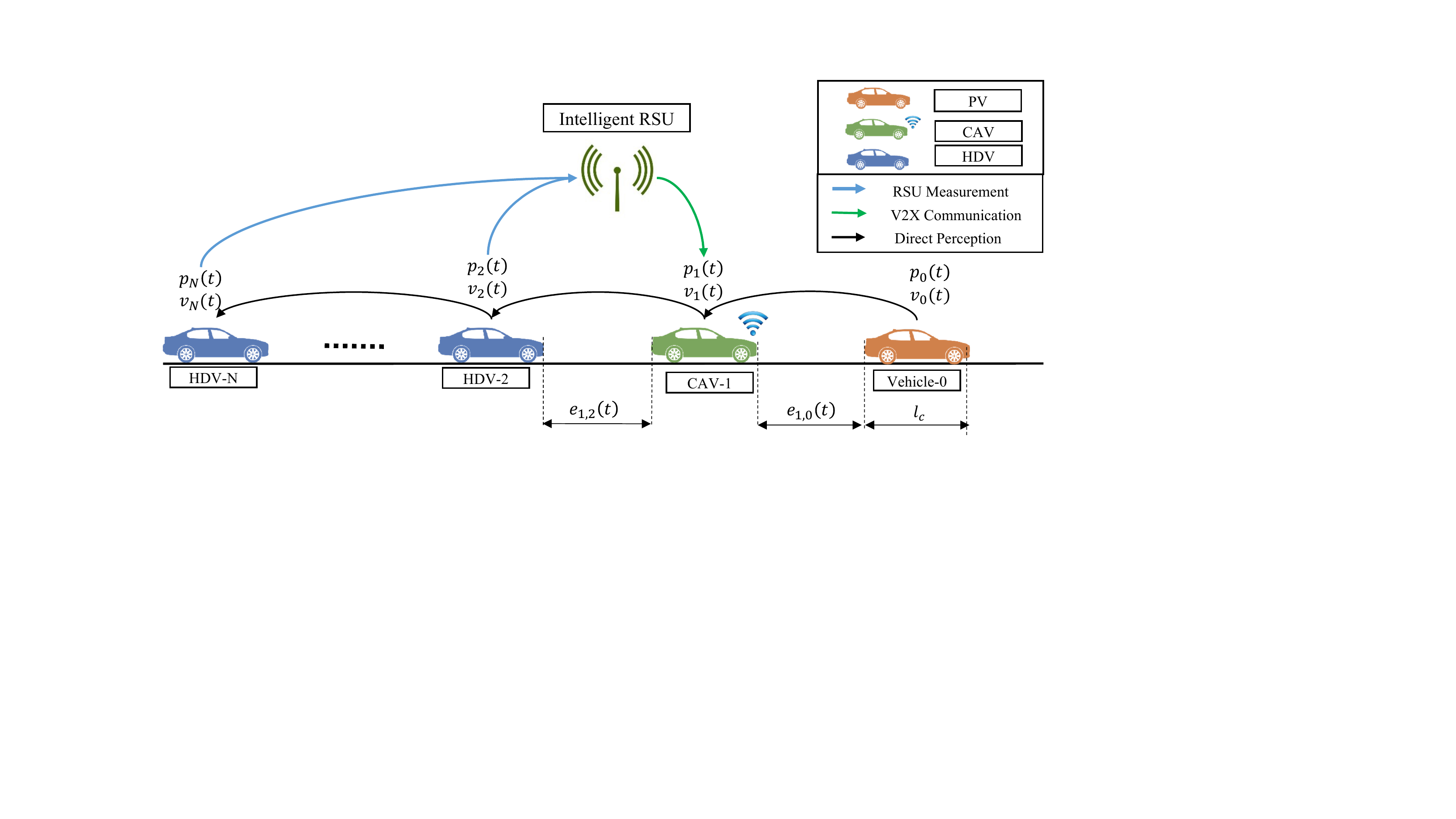}
    \caption{The ego CAV (green) is traveling with $N-1$ following HDVs (blue) and a PV (orange). 
    The communication structure is shown according to Section \ref{sec:communication_topology}. }
    \label{fig:problem_formulation}
\end{figure*}
\subsection{Vehicle Dynamics and Constraints}
We model the longitudinal dynamics of each vehicle $i\in\mathcal{N}$ as %
\begin{subequations}\label{eq:dynamics_pv}
\begin{align}
    &\dot{p}_i(t) = v_i(t), \\
    &\dot{v}_i(t) = u_i(t),
\end{align}
\end{subequations}
where $p_i(t)\in \mathcal{P}_i$, $v_i(t)\in \mathcal{V}_i$ and $u_i(t)\in\mathcal{U}_i$ are the position of the front bumper, speed and control input (acceleration/deceleration) of each vehicle $i\in \mathcal{N}$, respectively. 
The sets $\mathcal{P}_{i}$, $\mathcal{V}_{i}$, and $\mathcal{U}_{i}$, $i\in\mathcal{N},$ are complete and totally bounded subsets of $\mathbb{R}$. 

The speed $v_i(t)$ and control input $u_i(t)$ of each vehicle $i \in \mathcal{N}$ are subjected to the following constraints
\begin{subequations}\label{eq:state_control_constraints}
\begin{align}
    0\le v_{\min} \le v_i(t) &\le v_{\max},\label{eq:state_constraints}\\
    u_{\min} \le u_i(t) &\le u_{\max},\label{eq:control_constraints}
\end{align}
\end{subequations}
where $v_{\min}$ and $v_{\max}$ are the minimum and maximum allowable speed of the considered roadway, respectively, and $u_{\min}$ and $u_{\max}$ are the minimum and maximum control input, respectively. 
To simplify the \hilitediff{exposition} in the paper and without loss of generality, we consider that all the vehicles have the same attributes. 
Thus, we can consider the same minimum and maximum control input $u_{\min}$ and $u_{\max}$ for all the vehicles in \eqref{eq:control_constraints}.

To formulate the rear-end collision constraint between two consecutive vehicles $i, (i-1) \in \mathcal{N}$, we use the following definitions.
\begin{definition}\label{def:s_i}
The safe following gap $s_i(t)$ between two consecutive vehicles $i \text{ and }(i-1)\in\mathcal{N}$ is
\begin{equation}\label{eq:s_i}
    { s_i(t)= \rho_iv_i(t)+ s_0,}
\end{equation}
where $\rho_i\in \mathbb{R}_{>0}$ denotes a safe time headway that each vehicle $i\in\mathcal{N}$ maintains while following its immediate preceding vehicle $i-1\in\mathcal{N}$, and $s_0\in \mathbb{R}_{>0}$ is the standstill distance denoting the minimum bumper-to-bumper gap at stop.
\end{definition}
\begin{definition}\label{def:2}
The \textit{headway} $\Delta p_i(t)$ and \emph{approach rate} $\Delta v_i(t)$ of vehicle $i\in\mathcal{N}$ denote the bumper-to-bumper inter-vehicle spacing and speed difference, respectively, between the two consecutive vehicles $i,~(i-1) \in \mathcal{N}$, i.e.,
\begin{subequations}\label{eq:delta}
\begin{align}
    &\Delta p_i(t)=p_{i-1}(t)- p_i(t)-l_c, \label{eq:headway}\\
    &\Delta v_i(t) = v_{i-1}(t) - v_i(t), \label{eq:approach_rate}
\end{align}
\end{subequations}
where $l_c\in\mathbb{R}_{>0}$ is the length of each vehicle. 
We consider that all vehicles under consideration have the same length $l_c$ for simplicity.
\end{definition}

The rear-end collision avoidance constraint between two consecutive vehicles $i, i-1\in\mathcal{N}$ can thus be written as
\begin{align}\label{eq:rearend_constraint}
    \Delta p_i(t) \ge s_i(t).
\end{align}
The dynamics \eqref{eq:dynamics_pv} of each vehicle $i\in\mathcal{N}$ can take different forms based on the consideration of connectivity and automation.
\hilitediff{For CAV-$1$, the control input $u_1(t)$ is derived by solving an RHC problem, the structure of which we introduce and discuss in detail in Section~\ref{sec:model_dependent_mpc}.}
In contrast to the CAV, we consider a generic car-following model-based control policy of the following form to define the predecessor-follower coupled dynamics (see Fig. \ref{fig:problem_formulation}) of each HDV $i$ in $\mathcal{N}_{\text{HDV}}$,
\begin{gather}\label{eq:hdv_dynamics}
     {{u}_i(t) = f_i (\Delta p_i(t), \Delta v_i(t), v_i(t)),}
\end{gather}
where $f_i(\cdot)$ represents the behavioral model of the car-following dynamics of each HDV $i$.
There are several car-following models reported in the literature that can emulate a varied class of human driving behavior; see \cite{bando1995dynamical,treiber2013traffic}. 
For example, a widely used car-following model is the optimal velocity model (OVM) \citep{bando1995dynamical}.
One of the simplest forms of the OVM car-following model is given by \citep{bando1995dynamical}
\begin{gather}\label{eq:hdv_dynamics_ovm}
    {{u}_i(k) = \alpha_i (V_i(\delta_i(k),s_i(k)) -v_i(k)) + \beta_i \Delta v_i(t),}
\end{gather}
where $\alpha_i, \beta_i \in \mathbb{R}_{>0}, ~i\in \mathcal{N}_{\text{HDV}}$ denote the control gain representing the driver's sensitivity coefficient and the speed-dependent coefficient, respectively, $\delta_i(t)= \Delta p_i(t)-s_i(t)$, and $V_i(\delta_i(t),s_i(t))$ denotes the equilibrium speed-spacing function
\begin{gather}
V_i(\delta_i(t),s_i(t))=
\begin{array}
[c]{ll}%
 {\frac{v_d}{2}(\tanh(\delta_i(t))}{+\tanh(s_i(t))),}
\end{array}
\label{eq:V(s)}
\end{gather}
where $v_d$ is the desired speed of the roadway. 

Note that, if there is no preceding vehicle, we set $\Delta p_i(t)=\infty$ which results in $v_i(t)$ approaching the desired speed $v_d$ with the progression of time. 
The car-following model and control parameters considered in our numerical study are provided in Section \ref{sec:sim}.

Finally, PV-$0$, if it exists, can be considered to be either a CAV or HDV. 
CAV-$1$ does not know the control structure of PV-$0$ and has to guarantee rear-end safety implications. 
\subsection{Communication Structure}\label{sec:communication_topology}

CAV-$1$ is retrofitted with appropriate sensors and communication devices to estimate in real-time the state information of the vehicles in $\mathcal{N}\setminus\{1\}$. 
For example, the state information of PV-$0$ can be directly measured by the front sensors of CAV-$1$, whereas the state information of the following HDVs in $\mathcal{N}_{\text{HDV}}$ can be done using a vehicle-to-everything communication protocol and/or intelligent roadside units. %
Consequently, we can define the structure of the information available to CAV-$1$ as follows.

\begin{definition}\label{def:information-set}
The information set $\mathcal{I}(t)$ available to CAV-$1$ at time $t$ is
\begin{align}
    \mathcal{I}(t) = \{\boldsymbol{p}_{0:N}(t), \boldsymbol{v}_{0:N}(t)\},
\end{align}
where $\boldsymbol{p}_{0:N}(t)=[{p}_0(t),\ldots, {p}_N(t)]^T$ and $\boldsymbol{v}_{0:N}(t)=[{v}_0(t),\ldots, {v}_N(t)]^T$.
\end{definition}

In our modeling framework, we impose the following assumption about the communication protocol.
\begin{assumption}\label{assum:4}
The estimation and transmission of the HDVs' state information to the CAV occur without any significant delay or error.
\end{assumption}
Assumption \ref{assum:4} might be too restrictive. However, it can be relaxed as long as the noise in the measurements and/or delays is bounded. For example, we can determine upper bounds on the state uncertainties as a result of sensing or communication errors and delays, and incorporate these into more conservative safety constraints.

In what follows, we introduce the platoon formation problem.
\subsection{Platoon Formation Problem}
Conventionally, a platoon is defined as a closely-spaced group of vehicles, where each vehicle in the group is traveling with equal headway $\Delta p_i(t)$ and speed $v_i(t)$. This means that a platoon is said to be formed for a vehicle group $\mathcal{N}_p$ at some time $t=t^p$ if for each vehicle $i\in\mathcal{N}_p$,
\begin{subequations}
\begin{align}
\Delta p_i(t) &= \Delta p_{\mathrm{eq}}, \quad  t\ge t^p, \label{eq:platoon_conventional_1}\\
v_i(t) &= v_{\mathrm{eq}}, \quad  t\ge t^p \label{eq:platoon_conventional_2},
\end{align}
\end{subequations}
where, $\Delta p_{\mathrm{eq}}, v_{\mathrm{eq}}\in \mathbb{R}_{>0}$ are the equilibrium platoon headway and speed, respectively.
However, the conventional definition of platoon formation does not hold for a group of heterogeneous vehicles having different driving behavior as we would expect from a real-world scenario. 
In the problem we are addressing, each HDV $i$ in $\mathcal{N}_{\text{HDV}}$ can have different safe time headway $\rho_i$ (Definition \ref{def:2}) and behavioral function $f_i(\cdot)$. 
As a result, $\Delta p_i(t)$ for each HDV $i$ in $\mathcal{N}_{\text{HDV}}$ will converge to different equilibrium values as time $t$ progresses, violating the conditions of platoon formation in \eqref{eq:platoon_conventional_1}.
Hence, we need to revise the definition of platoon formation in the context of a heterogeneous vehicle group, as we have in our problem formulation.

\begin{definition}\label{def:platoon_formation_revision}
For a heterogeneous vehicle group $\mathcal{N}_p$, 
a platoon is formed at some time $t=t^p$ if for each vehicle $i\in\mathcal{N}_p$ the following conditions hold
\begin{align}
   \lim_{ t \to t^p} \left\| \Delta p_i(t) - s_i(t) \right\| = 0,\label{eq:platoon_revision_1}\\
   \lim_{ t \to t^p} \left\| \Delta v_i(t) \right\| = 0.\label{eq:platoon_revision_2}
\end{align}
\end{definition}

\begin{remark}\label{rem:rmse_platoonFormation}
To determine the platoon formation time $t^p$, the conditions in Definition \ref{def:platoon_formation_revision} might be too restrictive in practice. 
Therefore, we introduce the following root-mean-squared-error-based conditions to relax the conditions in Definition \ref{def:platoon_formation_revision}.
\begin{align}
    \sqrt{\sum_{i=2}^N \bigg( \Delta p_i(t) - s_i(t) \bigg)^2} &\le \epsilon_{\Delta p},&\forall t\ge t^p,\label{eq:platoon_formation_1}\\
    \sqrt{\sum_{i=1}^N \bigg( v_i(t) - \frac{\sum_{i=1}^N v_i(t)}{N} \bigg)^2} &\le \epsilon_{v},&\forall t\ge t^p,\label{eq:platoon_formation_2}
\end{align}
where $\epsilon_{\Delta p},\epsilon_{v} \in\mathbb{R}_{>0}$ are some user-defined small deviation.
\hilitediff{The conditions \eqref{eq:platoon_formation_1} and \eqref{eq:platoon_formation_2} are used to examine the platoon formation time in the simulations in Section~\ref{sec:sim}.}
\end{remark}

Next, we formalize the problem of platoon formation in a mixed traffic environment addressed in the paper as follows.
\begin{problem}\label{prob:1}
The objective of CAV-$1$ is to derive its control input $u_1(t)$ given the information set $\mathcal{I}(t)$ so that the vehicles in $\mathcal{N}_p$ form a platoon according to the Definition \ref{def:platoon_formation_revision} and satisfies the state, control, and safety constraints in \eqref{eq:state_control_constraints} and \eqref{eq:rearend_constraint}, respectively.
\end{problem}

\hilitediff{In this paper, we adopt an RHC framework to address Problem \ref{prob:1}. 
The basic principle of an RHC framework is that the current control action sequence is obtained by solving an optimization problem with a control horizon $H \in \mathbb{N} \setminus \{0\}$, and only the first input of the solved control sequence is applied. 
Then the horizon moves forward a step and the process is repeated until a final time $t^f$ is reached.}

\subsection{Feasibility of Platoon Formation}\label{subsec:feasibility_problem}
\hilitediff{If there exists a roadway of finite length $L\in \mathbb{R}_{>0}$ to form the platoon, then we need to check whether a feasible choice of final time $t^f$ leads to a feasible Problem \ref{prob:1}.}
In our previous work, we  showed that a platoon formation with the following HDVs is achieved by non-positive control input of the leading CAV, i.e., $u_1(t)\in [u_{\min},0]$; see \cite{mahbub2021_platoonMixed, mahbub2022ACC}. 
Therefore, we can consider the extremes of $[u_{\min}, 0]$ to check whether $t^f$ is feasible.
The following result provides the feasibility check of the final step $t^f$.
\begin{lemma}\label{lem:Bound_t^f}
Let \hilitediff{$t^0 = 0$} be the initial time when CAV-$1$ starts deriving and implementing its control input $u_1(t)$, $t\ge t^0$, to form a platoon with the following HDVs.
The final time $t^f$ of the RHC framework that CAV-$1$ has available to solve Problem \ref{prob:1} on a given roadway of length $L\in\mathbb{R}_{>0}$ is bounded by the following relation,
\begin{align}\label{eq:time_feasibility}
   \frac{L}{v_1(t^0)} \le t^f \le \mathds{1}_{L\le L_s} \tau_1 + (1- \mathds{1}_{L\le L_s}) \tau_2,
\end{align}
where $\mathds{1}_{L\le L_s}$ is an indicator function, $L_s = \frac{v_{\min}^2-v_1^2(t^0)}{2u_{\min}}$, $\tau_1 = \frac{-v_1(t^0) + \sqrt{v_1^2(t^0)+ 2 u_{\min}L}}{u_{\min}}$ and $\tau_2 = \frac{v_{\min}-v_1(t^0)}{u_{\min}}-\frac{v_{\min}^2-v_1^2(t^0)}{2u_{\min}v_{\min}}$.
\end{lemma}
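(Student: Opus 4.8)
The plan is to exploit the result established in our prior work (\cite{mahbub2021_platoonMixed, mahbub2022ACC}) that platoon formation is driven by a non-positive control input, so that any admissible CAV trajectory satisfies $u_1(t)\in[u_{\min},0]$ for all $t\ge t^0$. Under this restriction the speed $v_1(\cdot)$ is non-increasing and confined to $[v_{\min}, v_1(t^0)]$ by the state constraint \eqref{eq:state_constraints}. Interpreting $t^f$ as the instant at which CAV-$1$ has traversed the available roadway of length $L$, i.e. $\int_{t^0}^{t^f} v_1(\tau)\,d\tau = L$, the traversal time is monotone in how aggressively the CAV decelerates. The two extreme admissible profiles $u_1\equiv 0$ and $u_1\equiv u_{\min}$ therefore bracket every feasible $t^f$, and I would obtain the lower and upper bounds in \eqref{eq:time_feasibility} by evaluating these two extremes.

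For the lower bound I would take $u_1\equiv 0$, the fastest admissible profile since the control cannot be positive. The speed then stays constant at $v_1(t^0)$, and the distance $L$ is covered in time $L/v_1(t^0)$. Because no admissible control can raise the speed above $v_1(t^0)$, this is the smallest possible traversal time, yielding $L/v_1(t^0)\le t^f$.

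For the upper bound I would take the maximal deceleration $u_1\equiv u_{\min}$, the slowest admissible profile, while tracking the constraint $v_1\ge v_{\min}$. By the kinematics \eqref{eq:dynamics_pv}, starting from $v_1(t^0)$ the CAV reaches $v_{\min}$ after travelling exactly the distance $L_s=\tfrac{v_{\min}^2-v_1^2(t^0)}{2u_{\min}}$, which is positive since $u_{\min}<0$ and $v_{\min}<v_1(t^0)$. This threshold splits the analysis into the two cases captured by the indicator $\mathds{1}_{L\le L_s}$. If $L\le L_s$, the CAV covers the whole roadway before the speed constraint becomes active, so $L=v_1(t^0)\,t+\tfrac12 u_{\min}t^2$; selecting the physically admissible (positive) root of this quadratic gives the traversal time $\tau_1$. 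If $L>L_s$, the motion is two-phase: a deceleration phase from $v_1(t^0)$ to $v_{\min}$ over distance $L_s$, taking time $\tfrac{v_{\min}-v_1(t^0)}{u_{\min}}$, followed by a cruise phase at the constant speed $v_{\min}$ over the residual distance $L-L_s$; summing the two phase durations and simplifying yields $\tau_2$. Combining the cases through the indicator function produces the upper bound in \eqref{eq:time_feasibility}.

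The main obstacle is the monotonicity argument that legitimizes reducing the problem to the two extreme control profiles: I must show that, among all profiles with $u_1(t)\in[u_{\min},0]$, the traversal time of the fixed distance $L$ is minimized by $u_1\equiv 0$ and maximized by $u_1\equiv u_{\min}$. This follows from a comparison principle on the position $p_1(\cdot)$ — a pointwise-smaller acceleration yields a pointwise-smaller speed and hence a pointwise-smaller displacement, so the end of the roadway is reached no earlier — but it must be combined carefully with the active state constraint $v_1\ge v_{\min}$, which is precisely what forces the two-case structure in the upper bound. A secondary technical point is verifying that the correct root of the quadratic is chosen when $L\le L_s$ and that the resulting $\tau_1$ and $\tau_2$ are well-defined and positive.
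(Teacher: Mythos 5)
Your proposal is correct and follows essentially the same route as the paper's proof: the lower bound from the constant-speed (zero-control) profile, and the upper bound from the maximal-deceleration profile with the switching distance $L_s$ splitting the analysis into the quadratic case ($\tau_1$) and the two-phase decelerate-then-cruise case ($\tau_2$). The only difference is that you make explicit the comparison-principle argument justifying why the two extreme profiles of $u_1(t)\in[u_{\min},0]$ bracket all traversal times, which the paper leaves implicit.
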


\begin{proof}

Let $t^e$ be the time that the CAV reaches the end of the available roadway of length $L$ when cruising with a constant speed $v_1(t^0)$. 
Then, $t^e=t^0+\frac{L}{v_1(t^0)}$. 
Consequently, the minimum time that CAV-$1$ can take to traverse the distance $L$ is $ t^e-t^0$, which is then the lower bound of the horizon $t^f$ in \eqref{eq:time_feasibility}.

The maximum time that CAV-$1$ can take to travel distance $L$ can be computed by considering the following piecewise control input of CAV-$1$ constructed using the constraints in \eqref{eq:state_control_constraints},
\begin{gather}\label{eq:piecewise_control}
    u_1(t) =
    \begin{cases}
 u_{\min}, & \text{ if } v_1(t)>v_{\min}, \\
 0, & \text{ if } v_1(t)=v_{\min}.
\end{cases}
\end{gather}

Let us consider the time $t=t^s$ where the control input $u_1(t)$ switches from $u_1(t)=u_{\min}$ to $u_1(t)=0$ in \eqref{eq:piecewise_control}. 
Using \eqref{eq:dynamics_pv}, we have $v_{\min}=v_1(t^0)+u_{\min} (t^s-t^0)$, which yields $t^s=t^0 + \frac{v_{\min}-v_1(t^0)}{u_{\min}}$. 
Furthermore, using \eqref{eq:dynamics_pv}, we can compute that the control switch in \eqref{eq:piecewise_control} occurs after traveling the distance $L_s= \frac{v_{\min}^2-v_1^2(t^0)}{2u_{\min}}$. 
We need to consider the following two cases:

(a) If $L \le L_s$, then the upper bound of $t^f$ can be computed by solving $\frac{1}{2}u_{\min}(\tau_1)^2+v_1(t^0) \tau_1 - L = 0$ for $\tau_1$, which yields $\tau_1 = \frac{-v_1(t^0) + \sqrt{v_1^2(t^0)+ 2 u_{\min}L}}{u_{\min}}$. 
Here, $\tau_1$ is the upper bound of $t^f$.

(b) If $L > L_s$, then CAV-$1$ travels the distance $L_s$ with control input $u_1(t)=u_{\min}$ and time duration $t^s-t^0$, and the remaining distance $L-L_s$ with cruising speed $v_{\min}$ and time duration $\frac{L-L_s}{v_{\min}}$. 
The maximum time duration $\tau_2$ to traverse distance $L$ can be computed using $\tau_2=(t^s-t^0) + \frac{L-L_s}{v_{\min}}$. 
Using the values of $t^s, L_s$, we get, $\tau_2 = \frac{v_{\min}-v_1(t^0)}{u_{\min}}-\frac{v_{\min}^2-v_1^2(t^0)}{2u_{\min}v_{\min}}$.

Combining the above cases, we can derive the upper-bound on the final horizon as $ \mathds{1}_{L\le L_s} \tau_1 + (1- \mathds{1}_{L\le L_s}) \tau_2$, which is the right-hand term of the inequality in \eqref{eq:time_feasibility}.
\end{proof}

The conditions in \eqref{eq:time_feasibility} only provide a formal way to select an appropriate final horizon $t^f$. 
The infeasibility of the final horizon $t^f$ does not necessarily render Problem \ref{prob:1} infeasible. 
Conversely, the feasibility of $t^f$ does not imply that Problem \ref{prob:1} will be feasible as well, i.e., a platoon formation is guaranteed. 
In what follows, given that the final horizon $t^f$ is feasible according to Lemma \ref{lem:Bound_t^f}, we provide conditions to investigate whether Problem \ref{prob:1} is feasible given the constraints \eqref{eq:state_control_constraints}, and a finite roadway of length $L$ for platoon formation.

\hilitediff{Next, we check the feasibility of Problem \ref{prob:1} considering the most aggressively decelerating control structure in \eqref{eq:piecewise_control} as discussed in the following lemma.}

\begin{lemma}
\label{lem:feasibility_platoon}
Suppose that CAV-$1$ starts deriving and implementing its control input $u_1(t)$ at time $t=t^0$ to form a platoon with the following HDVs at some time $t^p \in (t^0, t^0+t^f)$ within a given roadway of length $L$, where $t^f$ is a feasible final horizon bounded by the lower- and upper-values $\tau_1$ and $\tau_2$ according to Lemma~$\ref{lem:Bound_t^f}$, respectively. 
Suppose that CAV-$1$ has the control structure as in \eqref{eq:piecewise_control}, and $L_s$ is the length where control input $u_1(t)$ switches from $u_{\min}$ to $0$.

(a) If $L_s > L$, then Problem \ref{prob:1} is feasible if
\small
\begin{equation}\label{eq:feasiblity_platoon_1}
\begin{multlined}
0 \hspace{-2pt} < \hspace{-2pt} \frac{v_N(t^0)-v_1(t^0)-\sqrt{(v_N(t^0)-v_1(t^0))^2-2u_{\min}\Delta p_s}}{u_{\min}} \hspace{-2pt} \le \hspace{-2pt} \tau_1
\end{multlined}
\end{equation}
\normalsize
holds and, (b) if $L_s \le L$, then Problem \ref{prob:1} is feasible if
\begin{align}\label{eq:feasiblity_platoon_2}
    0 < \frac{\Delta p_s +v_{\min}\tau_s - L_s}{(v_{\min}-v_N(t^0))} \le \tau_2
\end{align}
holds, where $\Delta p_s = \sum_{i=2}^N (\Delta p_i(t^0)-s_i(t^0)) $.
\end{lemma}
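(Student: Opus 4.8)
The plan is to establish feasibility by exhibiting an explicit admissible trajectory: since the piecewise law \eqref{eq:piecewise_control} is the most aggressive admissible deceleration (it respects $u_{\min}$ and saturates at $v_{\min}$ by construction), it produces the fastest possible closing of the inter-vehicle gaps, so it suffices to show that the platoon forms under this particular control within the roadway and the horizon of Lemma \ref{lem:Bound_t^f}. I would reduce the whole question to a scalar kinematic analysis of the relative motion between the leader CAV-$1$ and the last follower HDV-$N$. Using the identity $p_1(t)-p_N(t)=\sum_{i=2}^N \Delta p_i(t)+(N-1)l_c$ obtained from \eqref{eq:headway}, and treating the speed-dependent safe gap $s_i(t)$ in \eqref{eq:s_i} as essentially constant over the formation interval, the per-vehicle platoon conditions \eqref{eq:platoon_revision_1}--\eqref{eq:platoon_revision_2} of Definition \ref{def:platoon_formation_revision} collapse to the single requirement that the displacement of HDV-$N$ relative to CAV-$1$ absorb the aggregate excess gap $\Delta p_s=\sum_{i=2}^N(\Delta p_i(t^0)-s_i(t^0))$. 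I would model HDV-$N$ by its reference speed $v_N(t^0)$ over the formation interval as the worst-case closing rate.

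For part (a), the hypothesis $L_s>L$ forces, through the proof of Lemma \ref{lem:Bound_t^f}, the control switch to lie beyond the roadway, so CAV-$1$ runs at $u_1\equiv u_{\min}$ for the entire interval. Imposing the closing condition $v_N(t^0)\tau-\bigl(v_1(t^0)\tau+\tfrac12 u_{\min}\tau^2\bigr)=\Delta p_s$ with $\tau=t^p-t^0$ gives a quadratic in $\tau$; I would then check that $u_{\min}<0$ makes the discriminant $(v_N(t^0)-v_1(t^0))^2-2u_{\min}\Delta p_s$ positive and that the physically meaningful (smaller positive) root is exactly the middle term of \eqref{eq:feasiblity_platoon_1}. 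Feasibility is then the statement that this formation time is strictly positive (the gap genuinely closes) and no larger than the upper bound $\tau_1$ of Lemma \ref{lem:Bound_t^f} (so the roadway is long enough), which is precisely the displayed inequality.

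For part (b), the hypothesis $L_s\le L$ splits the motion into a deceleration phase over $[t^0,t^s]$ covering $L_s$ in time $\tau_s=\tfrac{v_{\min}-v_1(t^0)}{u_{\min}}$, followed by a cruise at $v_{\min}$. I would accumulate the CAV-$1$ travel as $L_s+v_{\min}(t^p-t^0-\tau_s)$ and the HDV-$N$ travel as $v_N(t^0)(t^p-t^0)$, impose that their difference equal $\Delta p_s$, and solve for $t^p-t^0$. Because the cruise phase has constant relative velocity $v_{\min}-v_N(t^0)$, the equation is linear and the formation time reduces to the single ratio in \eqref{eq:feasiblity_platoon_2}; feasibility is again the requirement that this time lie in $(0,\tau_2]$.

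The main obstacle I expect is not the algebra but the justification of the reduction step: rigorously arguing that collapsing the aggregate gap $\Delta p_s$ through the CAV-$1$/HDV-$N$ relative motion is equivalent to the per-vehicle conditions \eqref{eq:platoon_revision_1}--\eqref{eq:platoon_revision_2}. This needs the monotone car-following response of the intermediate HDVs, so that closing the outer gap drives every inner gap to its safe value and equalizes all speeds at $v_{\min}$, together with a careful choice of the reference speed assigned to HDV-$N$. Indeed, the decision to track HDV-$N$ at its initial speed $v_N(t^0)$ versus letting it decelerate is exactly what fixes the sign of the $\Delta p_s$ term in the ratio, so I would keep that convention consistent between the two cases and reconcile it with the neglected speed-dependence of $s_i(t)$ in \eqref{eq:s_i}; this sign bookkeeping, rather than any hard estimate, is where the proof must be handled with the most care.
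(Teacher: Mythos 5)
Your proposal follows essentially the same route as the paper's proof: test feasibility along the maximal-deceleration law \eqref{eq:piecewise_control}, reduce the per-vehicle platoon conditions to the requirement that the CAV-$1$/HDV-$N$ relative displacement absorb the aggregate excess gap $\Delta p_s$ with HDV-$N$ held at its initial speed $v_N(t^0)$, and read off the formation time from a quadratic (case (a)) or linear (case (b)) kinematic balance whose value must lie in $(0,\tau_1]$ or $(0,\tau_2]$. The only difference worth noting is that your case (a) balance equation carries the sign that actually reproduces the discriminant $(v_N(t^0)-v_1(t^0))^2-2u_{\min}\Delta p_s$ appearing in \eqref{eq:feasiblity_platoon_1}, whereas the paper's written intermediate equation has the opposite sign yet states the same final root.
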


\begin{proof}
Consider that CAV-$1$ takes the time duration $\tau_p$ to form a platoon.

Case (a):  If $L_s > L$, we check whether a platoon can be formed with the control structure \eqref{eq:piecewise_control}. 
Suppose that $\Delta p_s = \sum_2^N (\Delta p_i(t^0)-s_i(t^0))$ is the additional spacing between CAV-$1$ and HDV-$N$ beyond the dynamic following spacing $s_i(t^0)$. 
Hence, to form a platoon with time duration $\tau_p$ according to Definition \ref{def:platoon_formation_revision}, we require $v_1(t^0)\tau_p+\frac{1}{2}u_{\min}(\tau_p)^2-v_N(t^0)\tau_p = \Delta p_s$. 
Solving this equation for $\tau_p$, we get $\tau_p = \frac{(v_N(t^0)-v_1(t^0))-\sqrt{(v_N(t^0)-v_1(t^0))^2-2u_{\min}\Delta p_s}}{u_{\min}}$. 
The value of $\tau_p$ is lower-bounded by $0$ to ensure positive value and upper-bounded by $\tau_1$ so that platoon is not formed beyond $L$, which yields \eqref{eq:feasiblity_platoon_1}.

Case (b): If $L_s \ge L$, then to form a platoon with time duration $\tau_p$, we require $L_s + v_{\min}(\tau_p-\tau_s)-v_N(t^0)\tau_p= \Delta p_s$. 
Solving for $\tau_p$, we get $\tau_p = \frac{\Delta p_s +v_{\min}\tau_s - L_s}{(v_{\min}-v_N(t^0))}$, which is lower- and upper-bounded by $0$ and $\tau_2$ to ensure platoon formation within $L$.
\end{proof}
\begin{remark}
We use the conditions in Lemma~\ref{lem:feasibility_platoon} only to investigate the feasibility of Problem~\ref{prob:1} constrained by a limited road space of length $L$ for the case where platoon formation is not possible even with the most aggressive braking maneuver of CAV-$1$. 
Satisfaction of the conditions in Lemma~\ref{lem:feasibility_platoon}, in general, does not guarantee the existence of a solution to Problem~\ref{prob:1} given the optimization criteria discussed in Problem~\ref{prob:1}.
\end{remark}

\section{\hilitediff{Data-Driven Receding Horizon Control Framework}}
\label{sec:model_dependent_mpc}
\hilitediff{In this section, we present a data-driven RHC for platoon formation where we consider a linear prediction model called constant time headway relative velocity (CTH-RV) model \citep{gunter2019modeling,wang2020onlineRLS} as a representation of human car-following behavior.} 
\hilitediff{Moreover, we use recursive least squares (RLS) method \citep{ljung1983RLS, wang2020onlineRLS} to estimate the HDVs' car-following parameters for future prediction in RHC.}
\hilitediff{The essential steps of the proposed framework can be shown in Fig. \ref{fig:control_framework} and are outlined as follows.}

\begin{enumerate}
\item \textbf{Data-driven parameter estimation:} 
At each time instant $k$, the current states $p_i(k), v_i(k)$ of each following HDV $i$ in $\mathcal{N}_{\text{HDV}}$ are communicated to CAV-$1$. 
\hilitediff{Since the exact car-following model $f_i$ of each HDV $i$ in $\mathcal{N}_{\text{HDV}}$ is unknown to CAV-$1$, it considers a CTH-RV car-following model to represent the driving behavior of each HDV and estimates the parameters of the car-following model for each HDV given online data.}
\item \textbf{Data-driven RHC problem:} \hilitediff{CAV-$1$ then uses the estimated car-following model from Step 1 to predict the future state trajectories of the following HDVs, along with considering the worst-case action of PV-$0$. 
It then derives the control input sequence by solving the RHC problem.}
Finally, CAV-$1$ applies only the first obtained control input.
\end{enumerate}

\hilitediff{In what follows, we provide a detailed exposition of the components discussed above.}%

\begin{figure*}
    \centering
    \includegraphics[width=0.88\textwidth,bb = 150 50 900 480,clip]{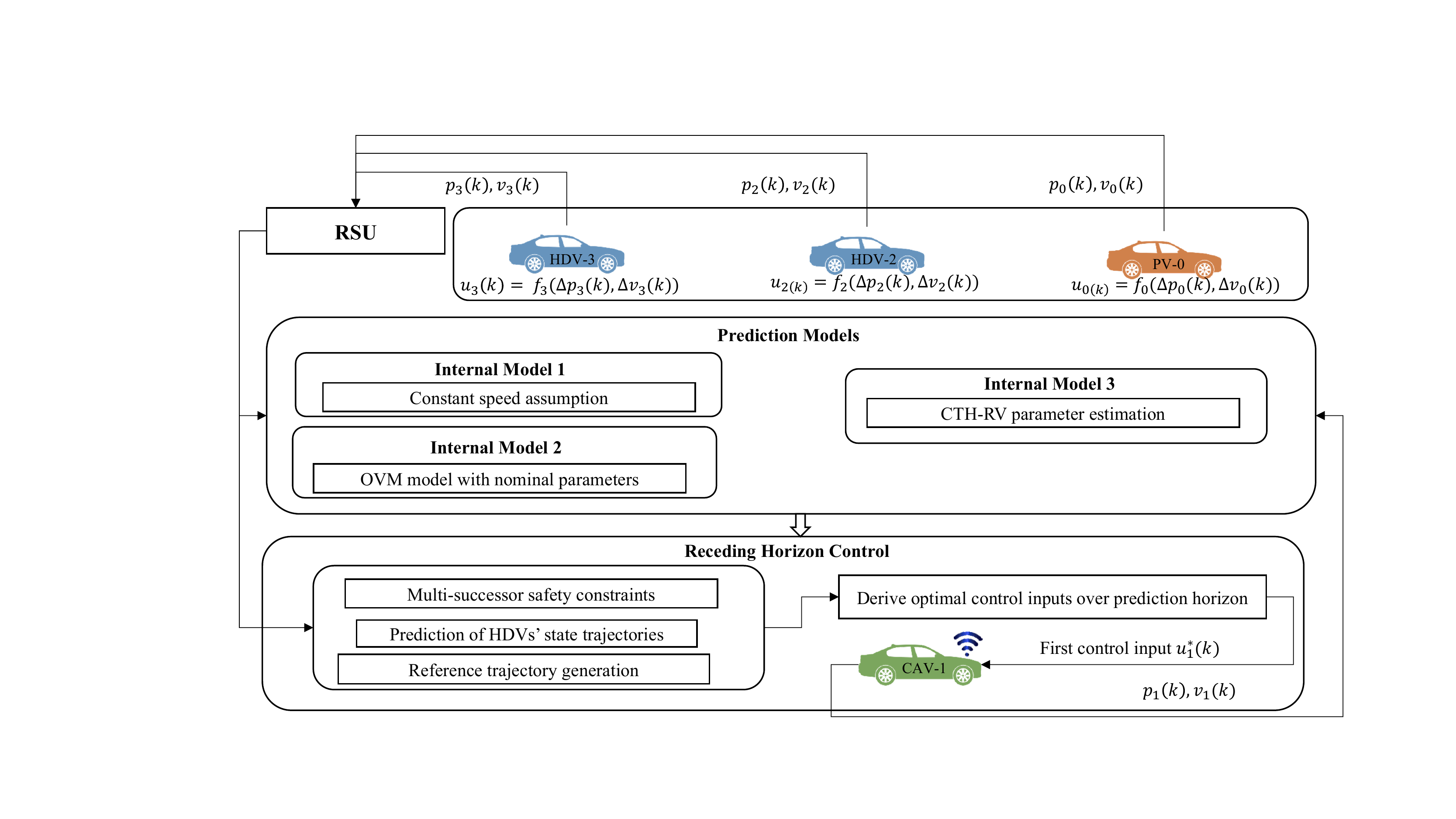}
    \caption{The structure of the proposed control framework to address Problem \ref{prob:1}.}
    \label{fig:control_framework}
\end{figure*}

\subsection{\hilitediff{Receding Horizon Control Formulation}}
\label{subsec:rhc}

\hilitediff{We consider a sampling time interval of $\tau$ to discretize the time and formulate the control problem.
Let $H \in \mathbb{N} \setminus \{0\}$ be the length of control horizon and $k \in \mathbb{N}$ be the current time step.}

The primary aim of the RHC is to minimize the squared error between the leader-follower gap $e_{1,N}(k)$ and the reference $e_{r}(k) = \sum_{i=2}^N s_i(k)$.
To this end, we formulate the first objective function $J_1$ which represents a reference tracking problem and takes the form
\hilitediff{
\begin{align}
    J_1 = \frac{1}{2} \omega_e \sum_{n=1}^{H} \left(  e_{1,N} (k+n) - e_{r}(k+n) \right )^2 ,\label{eq:cost1}
\end{align}	
where $\omega_e \in \mathbb{R}_{>0}$ is a positive weight.}
Note that using \eqref{eq:s_i} the reference output $e_{r}(k)$ can be written as
\hilitediff{\begin{align}
e_{r}(k) = \sum_{i=2}^N s_i(k) = (N-1)s_0 + \boldsymbol{\hat{\rho}}^T\boldsymbol{\hat{w}}(k),
\end{align}}
where $\boldsymbol{\hat{\rho}}=[\rho_2,\dots,\rho_N]^T$ and $\boldsymbol{\hat{w}}(k)= [v_2(k), \dots, v_N(k)]^T$. 

The second objective of the controller is to minimize the control effort of CAV-$1$ while forming the platoon. 
Thus, we have the second objective function as follows
\hilitediff{\begin{align}
    J_2 = \frac{1}{2} \omega_u \sum_{n=1}^{H} \left( u_1(k+n-1) \right)^2,\label{eq:cost2}
\end{align}
where $\omega_u \in \mathbb{R}_{>0}$ is a positive weight.}
\hilitediff{By minimizing CAV’s acceleration/deceleration, we minimize transient engine operation, leading to direct benefits in fuel consumption and emissions; see \citet{Malikopoulos2017}.}

\hilitediff{
Therefore, the RHC problem can be formulated as follows 
\begin{align}\label{eq:ocp_lin}
\centering
&\minimize_{\boldsymbol{U}_1 (k)} \quad J_1 + J_2,\\
&\text{subject to:}\nonumber\\
&\quad p_1(t+1) = p_1(t) + v_1(t) \tau + \frac{\tau^2}{2}u_1(t), \nonumber \\
&\quad v_1(t+1) = v_1(t) + u_1(t) \tau, \nonumber\\
&\quad \eqref{eq:state_constraints}, \eqref{eq:control_constraints}, \eqref{eq:rearend_constraint},\, \forall t = k+1, \dots, k+H, \; \forall i \in \NNN_p. \nonumber
\end{align}
where $\boldsymbol{U}_1(k) = [{u}_1(k),~{u}_1(k+1),\ldots,{u}_1(k+H-1)]^T$ is the vector of control inputs over the current control horizon.
}

The optimal control sequence $\boldsymbol{U}^*_1(k)$ at time instant $k$ is computed by solving the optimal control problem \eqref{eq:ocp_lin} and only the first control input is applied. 
Then the system moves to the next time instant $k+1$, and the process is repeated.

\hilitediff{
It can be observed that to solve \eqref{eq:ocp_lin} to obtain the control inputs of CAV-$1$, the states of the preceding vehicles and the following HDVs over the control horizon need to be predicted. 
As the car-following model of PV-$0$ is assumed unknown, we guarantee the rear-end safety between CAV-$1$ and PV-$0$ under the worst-case control actions of PV-$0$, \ie maximum deceleration that does not lead to the minimum allowed speed violation, which can be given by
\begin{equation}
u_0(t) = \max\, \left \{ u_{\mathrm{min}}, \frac{v_{\mathrm{min}} - v_0(t)}{\tau} \right \}. 
\end{equation}
Meanwhile, to predict the car-following behavior of the following HDVs, we utilize a data-driven prediction model that is elaborated in the next section. 
}

\subsection{Online Car-following Model Parameter Estimation}
In this section, we use a recursive least-squared formulation \citep{ljung1983RLS} to estimate the parameters of the car-following model representing the driving behavior of each of the following HDVs. 
\hilitediff{To this end, we consider the following CTH-RV model \citep{gunter2019modeling,wang2020onlineRLS},}
\begin{equation}\label{eq:cth-rv}
\begin{multlined}
    v_i(k+1) = v_i(k) + \eta_i(\Delta p_i(k) - \rho_i v_i(k))\tau + \\
    \nu_i (v_{i-1}(k)-v_i(k))\tau,
\end{multlined}
\end{equation}
where the model parameters $\eta_i$ and $\nu_i$ are the control gains on the constant time headway and the approach rate, and $\rho_i$ is the desired safe time headway for each HDV $i$ in $\mathcal{N}_{\text{HDV}}$, respectively.
We employ the linear CTH-RV model instead of other complex nonlinear models so that the resulting control problem presented in Section~\ref{subsec:rhc} is thus convex and can be solved efficiently in real-time.
Moreover, it is also observed that CTH-RV model is highly comparable to other nonlinear car-following models in terms of data fitting; see \cite{gunter2019modeling}.

Suppose that, we measure the speed $v_i(t)$, headway $\Delta p_i(t)$, and approach rate $\Delta v_i(t)$ data at a frequency corresponding to the sampling time $\tau$. 
Then we can rewrite the CTH-RV model \eqref{eq:cth-rv} for each HDV $i$ in $\mathcal{N_{\text{HDV}}}$ in discrete time as $v_i(k+1) = v_i(k) + \eta_i(\Delta p_i(k) - \rho_i v_i(k))\tau + \nu_i (v_{i-1}(k)-v_i(k))\tau$, which can be recast as
\begin{align}\label{eq:estimation_1}
    v_i(k+1) = \gamma_{i,1} v_i(k) + \gamma_{i,2} \Delta p_i(k) + \gamma_{i,3} v_{i-1}(k),
\end{align}
where $\gamma_1= (1-(\eta_i \rho_i+\nu_i)\tau)$, $\gamma_2 = \eta_i \tau$ and $\gamma_3= \nu_i \tau$ are the parameters we estimate online.
Then we can write the measurements in matrix form as
\begin{align}
    v_i(k+1) = \boldsymbol{\gamma}_i^T \boldsymbol{\phi}_i(k),
\end{align}
where $\boldsymbol{\phi}_i(k) = [v_i(k),~ \Delta p_i(k), ~v_{i-1}(k)]^T$ is the regressor vector and $\boldsymbol{\gamma}_i=[\gamma_{i,1},~\gamma_{i,2},~\gamma_{i,3}]^T$ is the parameter vector. 
If we have $H^e \in \mathbb{N} \setminus \{ 0 \}$ uniformly sampled measurements for $k= \{1, \ldots, H^e\}$, then we can estimate $\boldsymbol{\gamma}_i$ by solving the following minimization problem
\begin{align}\label{eq:least-squares}
\minimize_{\boldsymbol{\gamma}_i}\;  \frac{1}{2} \sum_{k=1}^{H^e} \xi^{(H^e-k)} [v_i(k)-\hat{v}_i(k|\boldsymbol{\gamma}_i)]^2,
\end{align}
where, $\hat{v}_i(k|\boldsymbol{\gamma}_i)=\boldsymbol{\gamma}_i^T\boldsymbol{\phi}_i(k)$ is a prediction of $v_i(k)$ based on the parameter vector $\gamma$, and $\xi\in[0,1]$ is the forgetting factor that assigns a higher weight to the recently collected data points and discounts older measurements. 
Note that, the objective function in \eqref{eq:least-squares} is quadratic in $\boldsymbol{\gamma}_i$, thus can be minimized analytically that yields
\begin{align}\label{eq:least_squares_sol}
\boldsymbol{\gamma}_i = \bigg[\sum_{k=1}^{H^e} \boldsymbol{\phi}_i(k)\boldsymbol{\phi}_i^T(k)\bigg]^{-1} \sum_{k=1}^{H^e} \boldsymbol{\phi}_i(k)v_i(k).
\end{align}

However, the above estimation procedure requires the storage of $\boldsymbol{\phi}_i(k)$ and $v_i(k)$ for all $k={1, \ldots, H^e}$, and yields the final estimated parameter vector $\boldsymbol{\gamma}_i$ for time step $H^e$. 
Since we are interested in online parameter estimation, it is computationally more efficient to update the intermediate time-dependent parameter vector $\hat{\boldsymbol{\gamma}}_i$ in \eqref{eq:least_squares_sol} recursively at each time step $k={1, \ldots, H^e}$ as new data becomes available. 
Therefore, we employ the following recursive form of \eqref{eq:least_squares_sol} known as the recursive least squares algorithm \citep{ljung1983RLS}
\small
\begin{subequations}
\begin{align}\label{eq:rls}
    &\boldsymbol{\hat{\gamma}}_i(k) = \hat{\boldsymbol{\gamma}_i}(k-1)+ \boldsymbol{L}_i(k)[v_i(k)-\hat{v}_i(k)], \\
    & \boldsymbol{P}_i(k) = \frac{1}{\xi} \bigg[\boldsymbol{P}_i(k-1) - \frac{\boldsymbol{P}_i(k-1)\boldsymbol{\phi}_i(k)\boldsymbol{\phi}_i^T(k)\boldsymbol{P}_i(k-1)}{\xi+ \boldsymbol{\phi}_i^T(k)\boldsymbol{P}_i(k-1)\boldsymbol{\phi}_i(k)}\bigg ],
\end{align}
\end{subequations}
\normalsize
\hilitediff{where
$\hat{\boldsymbol{\gamma}}_i (k)$ denotes the estimate of the parameter vector $\boldsymbol{\gamma}_i$ at time step $k$, $\boldsymbol{P}_i(k)$ is the estimation-error covariance matrix, while $\boldsymbol{L}_i(k)$ and $\hat{v}_i(k)$ can be computed as follows
\begin{subequations}
\begin{align} 
    & \hat{v}_i(k) = \hat{\boldsymbol{\gamma}_i}^T(k-1)\boldsymbol{\phi}_i(k), \\
    & \boldsymbol{L}_i(k) = \frac{\boldsymbol{P}_i(k-1)\boldsymbol{\phi}_i(k)}{\xi + \boldsymbol{\phi}_i^T(k)\boldsymbol{P}_i(k-1)\boldsymbol{\phi}_i(k)}.
\end{align}
\end{subequations}
}

The recursion of the RLS algorithm in \eqref{eq:rls} can be initiated at the time instant $k=0$ by considering an invertible matrix $\boldsymbol{P}_i(0)$ and the vector $\boldsymbol{\hat{\gamma}}_i(0)$ with some initial values.

\section{Simulation results}\label{sec:sim}
To validate the effectiveness of the control frameworks presented in the previous sections and evaluate their performance, we conduct extensive numerical simulations. 
In our analysis, we only consider feasible platoon formation problem, i.e., Problem \ref{prob:1} satisfies the feasibility requirements according to Lemmas \ref{lem:Bound_t^f} and \ref{lem:feasibility_platoon}.
Next, we discuss the configuration of the simulation environment and present an in-depth analysis of the simulation results.
\subsection{Simulation Setup}

We conducted several simulations with different numbers of following HDVs, and with or without a preceding vehicle.
To create a mixed traffic environment with different human driving styles, we employed the non-linear OVM given in \eqref{eq:hdv_dynamics_ovm} to simulate the driving behavior of the human drivers.
The parameters for each human driver's CFM were considered to be different from each other and chosen during the simulation by random perturbation of up to 30\% around nominal values.
The nominal values for the OVM car-following model are given in Table~\ref{table:cfm_param}.
We imposed a specific speed profile to be followed by the preceding vehicle so that we can analyze the robustness of the platoon formation framework under varying driving behavior.
For example, in the simulation, we considered a speed profile of the preceding vehicle that decelerates sharply to the minimum allowable speed and then sharply accelerates back to a higher speed. 
With the consideration of such an abrupt speed profile of the preceding vehicle, we investigated whether the CAV can avoid rear-end collision with the preceding vehicle.

\setlength{\tabcolsep}{10pt}
\renewcommand{\arraystretch}{1.0}

\begin{table}[!tb]
\caption{Nominal values of the car-following model}
\centering
\begin{tabular}{p{0.3\textwidth}p{0.1\textwidth}}
\toprule[1pt]%
\multicolumn{2}{c}{Optimal velocity model} \\
\midrule[0.5pt] %
Driver's sensitivity coefficient, $\alpha$ & 0.4  \\
Speed difference coefficient, $\beta$ & 0.2  \\
Desired speed, $v_d$ & 30 m/s\\
Safe time headway, $\rho$ & 1.8 s \\
\bottomrule[1pt] %
\end{tabular}
\label{table:cfm_param}
\end{table}

The parameters and weights in the control framework used for the simulations are given in Table~\ref{tab:sim-params}.
\hilitediff{We use Python for developing the simulation environment where the receding horizon control problems are formulated by CasADi \citep{CasAdi_python}.
We use the qpOASES solver \citep{ferreau2014qpoases} to solve the data-driven RHC.}
The RLS-based estimators in the data-driven RHC are initialized with the following values: 
$\hat{\boldsymbol{\gamma}}_i(0) = [0.67, 0.1, 0.18]^T$ and $\boldsymbol{P}_i(0) = 0.01 \, \mathbb{I}_3$ where $\mathbb{I}_3$ is the $3 \times 3$ identity matrix, while the forgeting factor is chosen as $\xi = 1.0$.
Note that all simulations in this work are performed on a Macbook Pro computer with a 2.7 GHz Quad-Core Intel Core i7 CPU and 16Gb RAM.

\begin{table}[!bt]
  \caption{Parameters of the receding horizon control framework}
  \label{tab:sim-params} 
  \centering
  \begin{tabular}{p{0.08\textwidth}p{0.08\textwidth}p{0.08\textwidth}p{0.08\textwidth}}
    \toprule[1pt]%
    \textbf{Parameters} & \textbf{Value} & \textbf{Parameters} & \textbf{Value} \\
    \midrule[0.5pt] %
    $\tau$ & \SI{0.1}{s} & $T_p$ & 20 \\
    $v_{\mathrm{max}}$ & \SI{35}{m/s} & $v_{\mathrm{min}}$ & \SI{0}{m/s}\\
    $u_{\mathrm{max}}$ & \SI{3}{m/s^2} & $u_{\mathrm{min}}$ & \SI{-5}{m/s^2}\\
    $\rho$ & \SI{1.5}{s} & $s_0$ & \SI{3.0}{m} \\
    $w_{e_{1,N}}$ & 1 & $w_u$ & 1 \\
    \bottomrule[1pt] %
  \end{tabular}
\end{table}

\subsection{Result Analysis and Discussion}

\subsubsection{Platoon formation and safety}
We first show the results of the platoon formation using the proposed data-driven RHC framework considering 4 following HDVs, \ie $N=5$.
We consider two different scenarios with (1) no preceding vehicle and (2) the presence of a preceding vehicle with a predefined speed trajectory. 
The trajectories of all vehicles without and with the preceding vehicle are illustrated in Figures~\ref{fig:ddMPC_a} and \ref{fig:ddMPC_b}, respectively.
The trajectories of the vehicles shown in each figure include their positions, speeds, headways, and speed gaps.
In the case where no preceding vehicle is considered, Fig.~\ref{fig:a_headway} shows that the headways of the vehicles become time invariant around $20$ s, and Fig.~\ref{fig:a_velocity} shows that the speeds of the vehicles converge to the same value. This implies that, in both scenarios, the CAV is able to form a platoon with the following HDVs. 
In addition, to challenge the safety guarantee of the CAV during the platoon formation process, we consider the presence of a preceding vehicle with an abrupt decelerating speed profile. As illustrated in Fig.~\ref{fig:ddMPC_b}, even in the presence of a preceding vehicle with an aggressive speed profile, the CAV is able to form a platoon with the following HDVs while maintaining a safe distance from the preceding vehicle.

Note that, in both of the above scenarios, none of the safety constraints in \eqref{eq:rearend_constraint} and speed constraint in \eqref{eq:state_constraints} are violated during the platoon formation process, as evident from Figures~\ref{fig:a_headway} and \ref{fig:b_headway}, and Figures~\ref{fig:a_velocity} and \ref{fig:b_velocity}, respectively. 
The rear-end safety guarantee can also be visualized by observing the non-intersecting position trajectories of the vehicles in Figures~\ref{fig:a_position} and \ref{fig:b_position}. 
This indicates the fidelity of the proposed data-driven RHC framework in satisfying all the constraints during the platoon formation process.

\subsubsection{Parameter estimation}
\hilitediff{The estimated parameters in the CTH-RV car-following model for all following HDVs are shown in Fig.~\ref{fig:estimation}.}
Recall that, CAV-$1$ characterizes in real-time the driving behaviors of four following HDVs with individual sets of estimated parameters $\eta$, $\nu$, and $\rho$. 
Initially, the estimated values of the car-following parameters show abrupt changes due to the lack of state information transmitted from the HDVs. 
However, as time progresses, more data points become available from the HDVs, and the estimation of the car-following parameters stabilizes towards the set of values that best describe the driving behavior of the HDVs. 
This is consistent with the observation by \cite{wang2020onlineRLS}, where parameters estimated using the RLS algorithm have been demonstrated to show near-convergence to the actual values.
Therefore, we can utilize the linear CTH-RV model and online RLS technique to approximate a nonlinear car-following model such as the OVM so that the resulting RHC problem is convex and thus, can be solved efficiently in real-time.

\begin{figure}[tb]
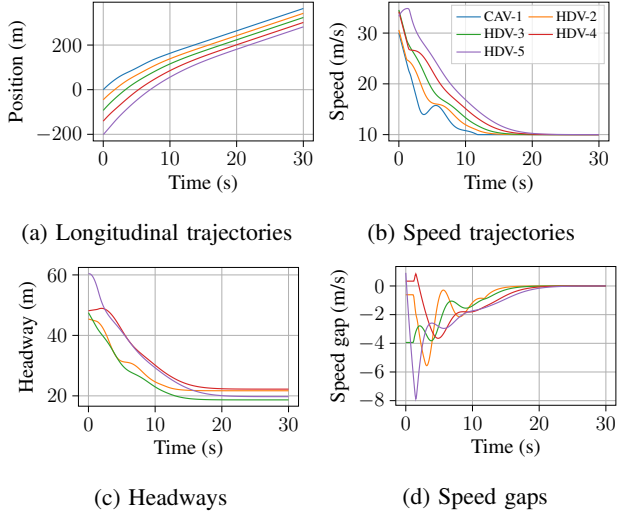

\centering
\begin{subfigure}{.23\textwidth}
    \centering
    \scalebox{0.54}{\input{tikz/ddMPC_position_a.tex}}
    \caption{Longitudinal trajectories}
    \label{fig:a_position}
\end{subfigure}
\begin{subfigure}{.23\textwidth}
    \centering
    \scalebox{0.54}{\input{tikz/ddMPC_speed_a.tex}}
    \caption{Speed trajectories}
    \label{fig:a_velocity}
\end{subfigure}
\vspace{5pt}

\begin{subfigure}{.23\textwidth}
    \centering
    \scalebox{0.54}{\input{tikz/ddMPC_headway_a.tex}}
    \caption{Headways}
    \label{fig:a_headway}
\end{subfigure}
\begin{subfigure}{.23\textwidth}
    \centering
    \scalebox{0.54}{\input{tikz/ddMPC_velogap_a.tex}}
    \caption{Speed gaps}
    \label{fig:a_vel_gap}
\end{subfigure}
\caption{Longitudinal trajectories, speed, headway and speed gaps of the vehicles for data-driven RHC in the simulation without a preceding vehicle.}
\label{fig:ddMPC_a}
\end{figure}

\begin{figure}[tb]
\centering
\begin{subfigure}{.23\textwidth}
    \centering
    \scalebox{0.54}{\input{tikz/ddMPC_position_b.tex}}
    \caption{Longitudinal trajectories}
    \label{fig:b_position}
\end{subfigure}
\begin{subfigure}{.23\textwidth}
    \centering
    \scalebox{0.54}{\input{tikz/ddMPC_speed_b.tex}}
    \caption{Speed trajectories}
    \label{fig:b_velocity}
\end{subfigure}
\vspace{5pt}

\begin{subfigure}{.23\textwidth}
    \centering
    \scalebox{0.54}{\input{tikz/ddMPC_headway_b.tex}}
    \caption{Headway}
    \label{fig:b_headway}
\end{subfigure}
\begin{subfigure}{.23\textwidth}
    \centering
    \scalebox{0.54}{\input{tikz/ddMPC_velogap_b.tex}}
    \caption{Speed gaps}
    \label{fig:b_vel_gap}
\end{subfigure}
\caption{Longitudinal trajectories, speed, headway, and speed gaps of the vehicles for data-driven RHC in the simulation with a preceding vehicle.}
\label{fig:ddMPC_b}
\end{figure}

\begin{figure}[!tb]
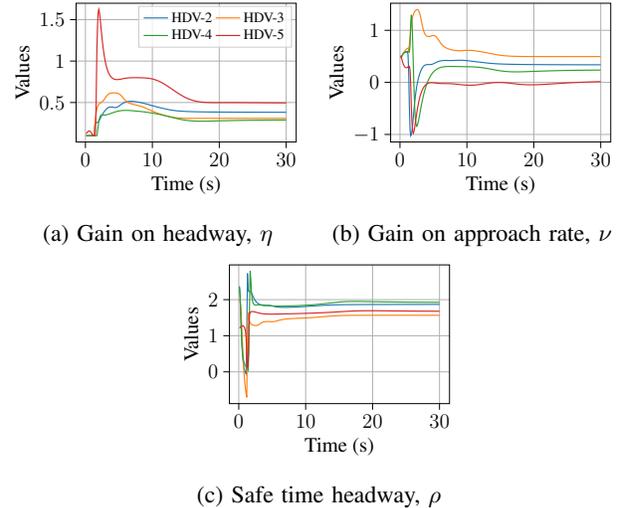

\centering
\begin{subfigure}{.23\textwidth}
    \centering
    \scalebox{0.54}{\input{tikz/eta.tex}}
    \caption{Gain on headway, $\eta$}
    \label{fig:eta}
\end{subfigure}
\begin{subfigure}{.23\textwidth}
    \centering
    \scalebox{0.54}{\input{tikz/nu.tex}}
    \caption{Gain on approach rate, $\nu$}
    \label{fig:nu}
\end{subfigure}
\vspace{5pt}

\begin{subfigure}{.23\textwidth}
    \centering
    \scalebox{0.54}{\input{tikz/rho.tex}}
    \caption{Safe time headway, $\rho$}
    \label{fig:rho}
\end{subfigure}
\caption{Estimates of the car-following parameters for all HDVs.}
\label{fig:estimation}
\end{figure}

\subsubsection{\hilitediff{Scalability and Robustness}}
For the scalability analysis, we show the position trajectories for all the vehicles in Fig.~\ref{fig:scalability} considering different numbers of following HDVs. 
Fig.~\ref{fig:scalability} verifies that the proposed control framework is able to create and maintain platoons of different sizes. 
\hilitediff{We also report the platoon formation time and the computation time of the method in those simulations considering different sizes of the platoon ($N=3$ to $N=8$) is summarized in Table~\ref{tab:metrics}.
The platoon formation time is computed using Remark~\ref{rem:rmse_platoonFormation}.
Generally, both the platoon formation time and computation time scale with the size of the vehicle platoon.
However, since the computation time of the entire algorithm is highly reasonable, the proposed framework shows promising practicality. 
}

\begin{figure}[tb]
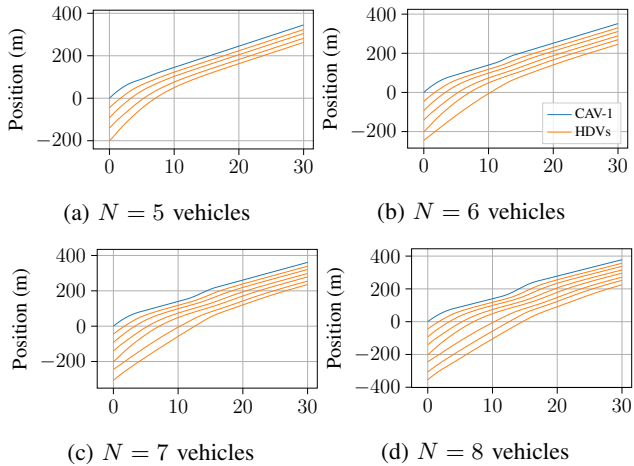

\centering
\begin{subfigure}{.23\textwidth}
    \centering
    \scalebox{0.54}{\input{tikz/scability_5.tex}}
    \caption{$N = 5$ vehicles}
\end{subfigure}
\begin{subfigure}{.23\textwidth}
    \centering
    \scalebox{0.54}{\input{tikz/scability_6.tex}}
    \caption{$N = 6$ vehicles}
\end{subfigure}
\vspace{5pt}

\begin{subfigure}{.23\textwidth}
    \centering
    \scalebox{0.54}{\input{tikz/scability_7.tex}}
    \caption{$N = 7$ vehicles}
\end{subfigure}
\begin{subfigure}{.23\textwidth}
    \centering
    \scalebox{0.54}{\input{tikz/scability_8.tex}}
    \caption{$N = 8$ vehicles}
\end{subfigure}
\caption{Longitudinal trajectories of the vehicles in the simulations with different numbers of following HDVs.}
\label{fig:scalability}
\end{figure}

\hilitediff{\begin{table}[!bt]
\vspace{13pt}
\caption{Platoon formation time and solving time for data-driven RHC in simulations with different numbers of following HDVs.}
\label{tab:metrics} 
\centering
\begin{tabularx}{0.48\textwidth}{ X | X | X }
\toprule[1pt]%
Number of vehicles & Platoon formation time (s) & Average computation time (ms) \\
\midrule[0.5pt] %
$3$ & 12.4 & 8.37 \\
$4$ & 15.3 & 6.19 \\
$5$ & 18.9 & 7.89 \\
$6$ & 23.4 & 13.31 \\
$7$ & 32.5 & 8.42 \\
$8$ & 31.6 & 10.32 \\
\bottomrule[1pt] %
\end{tabularx}
\end{table}}

\hilitediff{Finally, we examine the framework given variations of the HDV's car-following parameters, including the human driver's sensitivity coefficient $\alpha$, speed difference coefficient $\beta$, desired speed $v_d$, and safe time headway $\rho$.}
Particularly, we conduct several sets of simulations where, in each set, we consider 10 different nominal values for each OVM parameter, while keeping the other parameters constant.
Note that, the parameters for each HDV are still randomly perturbed around the nominal values.
We collect the platoon formation time for those simulations and show it in Fig.~\ref{fig:sens}.
The results suggest that with different human driving styles of the following HDVs, the control framework can guarantee the formation of a platoon. 
However, the platoon formation process gets delayed with increasing values $\beta$ and $v_d$, and is expedited with increasing values of $\alpha$ and $\rho$.

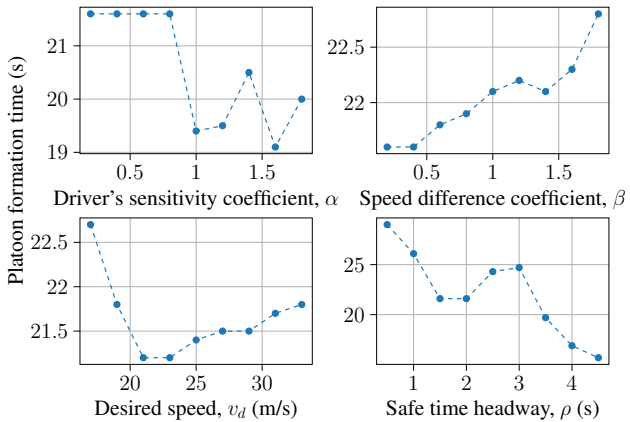
\begin{figure}[!tb]
\centering
\scalebox{0.57}{
\begin{tikzpicture}[font=\large]

\definecolor{darkgray176}{RGB}{176,176,176}
\definecolor{steelblue31119180}{RGB}{31,119,180}

\begin{groupplot}[group style={group size=2 by 2, horizontal sep = 1.5cm, vertical sep = 1.5cm}]
\nextgroupplot[
height =5.0cm,
width =7.0cm,
tick align=outside,
tick pos=left,
x grid style={darkgray176},
xlabel={ Driver’s sensitivity coefficient, $\alpha$},
xmajorgrids,
xmin=0.12, xmax=1.88,
xtick style={color=black},
y grid style={darkgray176},
ymajorgrids,
ymin=18.975, ymax=21.725,
ytick style={color=black}
]
\addplot [thick, steelblue31119180, dashed, mark=*, mark size=2, mark options={solid}]
table {%
0.2 21.6
0.4 21.6
0.6 21.6
0.8 21.6
1 19.4
1.2 19.5
1.4 20.5
1.6 19.1
1.8 20
};

\nextgroupplot[
height =5.0cm,
width =7.0cm,
tick align=outside,
tick pos=left,
x grid style={darkgray176},
xlabel={ Speed difference coefficient, $\beta$},
xmajorgrids,
xmin=0.12, xmax=1.88,
xtick style={color=black},
y grid style={darkgray176},
ymajorgrids,
ymin=21.54, ymax=22.86,
ytick style={color=black}
]
\addplot [thick, steelblue31119180, dashed, mark=*, mark size=2, mark options={solid}]
table {%
0.2 21.6
0.4 21.6
0.6 21.8
0.8 21.9
1 22.1
1.2 22.2
1.4 22.1
1.6 22.3
1.8 22.8
};

\nextgroupplot[
height =5.0cm,
width =7.0cm,
tick align=outside,
tick pos=left,
x grid style={darkgray176},
xlabel={ Desired speed, $v_d$ (m/s)},
xmajorgrids,
xmin=16.2, xmax=33.8,
xtick style={color=black},
y grid style={darkgray176},
ymajorgrids,
ymin=21.125, ymax=22.775,
ytick style={color=black}
]
\addplot [thick, steelblue31119180, dashed, mark=*, mark size=2, mark options={solid}]
table {%
17 22.7
19 21.8
21 21.2
23 21.2
25 21.4
27 21.5
29 21.5
31 21.7
33 21.8
};

\nextgroupplot[
height =5.0cm,
width =7.0cm,
tick align=outside,
tick pos=left,
x grid style={darkgray176},
xlabel={ Safe time headway, $\rho$ (s)},
xmajorgrids,
xmin=0.3, xmax=4.7,
xtick style={color=black},
y grid style={darkgray176},
ymajorgrids,
ymin=15.035, ymax=29.665,
ytick style={color=black}
]
\addplot [thick, steelblue31119180, dashed, mark=*, mark size=2, mark options={solid}]
table {%
0.5 29
1 26.1
1.5 21.6
2 21.6
2.5 24.3
3 24.7
3.5 19.7
4 16.9
4.5 15.7
};
\end{groupplot}
\draw (-1.8,-3.2) node[anchor=north west, rotate=90] {Platoon formation time (s)};
\end{tikzpicture}}
\caption{Platoon formation time under varying parameters of the OVM car-following models.}
\label{fig:sens}
\end{figure}

\section{Concluding Remarks}\label{sec:conc}
In this paper, we presented a framework to indirectly control the motion of the HDVs in a mixed traffic environment, where an ego CAV computes and implements its control input to force the following HDVs to form a platoon.  
\hilitediff{We formulated the platoon formation problem using an optimal control framework that is implemented through a receding horizon control approach subject to the state, control, and safety constraints.}
The proposed framework guarantees the rear-end collision safety of the vehicles by enforcing the multi-successor safety constraints while forming the platoon.
\hilitediff{Additionally, we developed a data-driven approach that exploits the CTH-RV car-following model and the recursive least square algorithm to estimate human driving behavior and predict human actions over a horizon.}
The efficacy of the platoon formation approaches is evaluated by extensive numerical simulations. 
\hilitediff{A direction for future research is to extend the proposed framework for optimal coordination of mixed vehicle platoons in traffic scenarios such as on-ramp merging, urban intersections, etc.}

\bibliographystyle{abbrvnat}
\bibliography{references/IDS_Publications_latest,references/acc_pt_vd_ref, references/platoon, references/mpc,references/mixed_traffic}

\end{document}